\theoremstyle{plain}
\newtheorem{Thm}{Theorem}[section]
\newtheorem{Prop}[Thm]{Proposition}
\newtheorem{Cor}[Thm]{Corollary}
\theoremstyle{definition}
\newtheorem{Def}[Thm]{Definition}
\newtheorem{Ex}[Thm]{Example}
\theoremstyle{remark}
\newcommand{\N}{\mathbb{N}}
\newcommand{\R}{\mathbb{R}}
\newcommand{\ua}{\uparrow}
\newcommand{\XYZ}{X^{Z\ua Y}}
\begin{document}
\title[Extending Continuous Functions]{Extending Continuous Functions}
\author{Bruce Blackadar}
\address{Department of Mathematics/0084 \\ University of Nevada, Reno \\ Reno, NV 89557, USA}
\email{bruceb@unr.edu}

\date{\today}

\maketitle
\begin{abstract}
If $X$ and $Y$ are compact metrizable spaces and $Z$ a closed subspace of $Y$, the set $X^{Z\ua Y}$ of continuous functions
from $Z$ to $X$ which extend to a continuous function from $Y$ to $X$ is a subspace of the space $X^Z$ of all continuous functions from $Z$ to $X$ with the
topology of uniform convergence.  $X^{Z\ua Y}$ is neither open nor closed in $X^Z$ in general.  We examine conditions on $X$
which insure that $X^{Z\ua Y}$ is always open [closed] for any $Y$ and $Z$.
\end{abstract}

\section{Introduction}

We make the following conventions:

\paragraph{}
\begin{enumerate}
\item[]  ``Space'' will mean ``compact metrizable space.''
\item[]  ``Map'' will mean ``continuous function.''
\item[]  Unless otherwise qualified, ``subspace'' will mean ``closed subspace.''
\item[]  ``Space pair'' will mean a pair $(Y,Z)$, where $Y$ is a space and $Z$ a (closed) subspace.
\end{enumerate}

If $X$ and $Y$ are spaces, we use the standard notation $X^Y$ for the set of maps from $Y$ to $X$.  $X^Y$ becomes a topological
space under the compact-open topology, which coincides with the topology of uniform convergence (with respect to any fixed metric
on $X$, or with respect to the unique uniform structure on $X$ compatible with its topology).  $X^Y$ is metrizable.

If $(Y,Z)$ is a space pair, for any $X$ there is a natural restriction map from $X^Y$ to $X^Z$, which is continuous.
Denote by $X^{Z\ua Y}$ the range of this restriction map, i.e.\ the set of maps from $Z$ to $X$ which extend to a map from $Y$ to $X$
(we simply say such a map {\em extends to} $Y$).  Of course, $\XYZ$ is rarely all of $X^Z$; in fact, $\XYZ=X^Z$ for all space pairs
$(Y,Z)$ if and only if $X$ is an absolute retract (AR) \cite[V.2.18]{BorsukRetracts}.  Examples show that $\XYZ$ is neither open nor closed in $X^Z$ in general.
We seek conditions on $X$ insuring that $\XYZ$ is always open or closed in $X^Z$ for any space pair $(Y,Z)$.

\begin{Def}
Let $X$ be a space.
\begin{enumerate}
\item[(i)]  $X$ is {\em $e$-open} if, for every space pair $(Y,Z)$, the set $\XYZ$ is open in $X^Z$.
\item[(ii)]  $X$ is {\em $e$-closed} if, for every space pair $(Y,Z)$, the set $\XYZ$ is closed in $X^Z$.
\end{enumerate}
\end{Def}

\paragraph{}
Our main results are:

\begin{enumerate}
\item[(i)]  If $X$ is an absolute neighborhood retract (ANR), then $X$ is both $e$-open and $e$-closed (\ref{ANREop}, \ref{ANREclosed}).
\item[(ii)]  If $X$ is $e$-open, then the topology of $X$ is ``locally trivial'' (\ref{EopLocExt}, \ref{EopLocContr}).
\item[(iii)]  An $e$-open space is locally equiconnected (\ref{EopLocEqui}).
\item[(iv)]  A finite-dimensional $e$-open space is an ANR (\ref{FDEop}).
\item[(v)]  If $X$ is $e$-closed, the path components of $X$ are closed (\ref{PathComp}).
\item[(vi)]  There are $e$-closed spaces which are not $e$-open (\ref{Neclosed}).
\item[(vii)]  There are path-connected spaces which are not $e$-closed (\ref{Comb}).
\end{enumerate}

I conjecture that every $e$-open space is an ANR, and thus a space is $e$-open if and only if it is an ANR.
I do not know an example of a space which is $e$-open but not $e$-closed, and I conjecture that none exist.
I do not have a good idea how to characterize $e$-closed spaces.

\bigskip

One space we will use repeatedly is $\N^\dag$, the one-point compactification of the natural numbers $\N$ (a countable discrete space).
$\N^\dag$ will often, but not always, be identified with the subspace
$$N=\left \{ 0\right \} \cup\left \{ \frac{1}{n}:n\in\N\right \}$$
of the closed unit interval $[0,1]$.

\section{Counterexamples}

In this section, we give examples of spaces which are not $e$-open or $e$-closed.

\begin{Prop}\label{PathComp}
Let $X$ be a space.  If $X$ has a path-component which is not closed, then $X$ is not $e$-closed.
\end{Prop}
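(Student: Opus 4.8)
The plan is to exhibit a single space pair $(Y,Z)$ for which $\XYZ$ fails to be closed in $X^Z$; this suffices to show $X$ is not $e$-closed. Let $P$ be a path-component of $X$ that is not closed, and choose a point $x\in\overline{P}\setminus P$. Since $X$ is metrizable, I can pick a sequence $(x_n)$ in $P$ with $x_n\to x$. I would take $Z=N$, identified with $\{0\}\cup\{1/n:n\in\N\}$, and build $Y$ by attaching to $N$ a single arc $A$ whose endpoints are the points $1$ and $0$ of $N$ (concretely, a semicircle in $\R^2$ with these endpoints), so that $(Y,Z)$ is a space pair and $Z$ is closed in $Y$. The point of this choice is that a map $g\colon Z\to X$ extends over $A$ if and only if there is a path in $X$ joining $g(1)$ to $g(0)$, i.e.\ if and only if $g(1)$ and $g(0)$ lie in the same path-component of $X$; thus $g\in\XYZ$ exactly when $g(1)$ and $g(0)$ are path-connected in $X$ (the rest of $Z$ imposes no extension constraint).

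With this in hand, define the limit map $g\colon Z\to X$ by $g(1/n)=x_n$ and $g(0)=x$; it is continuous because $x_n\to x$. Since $g(1)=x_1\in P$ while $g(0)=x\notin P$, these two values lie in different path-components, so $g\notin\XYZ$. For the approximating maps, I would truncate: let $g_k$ agree with $g$ on $\{1,\tfrac12,\dots,\tfrac1k\}$ and be constantly equal to $x_k$ on $\{1/n:n>k\}\cup\{0\}$. Each $g_k$ is continuous (it is eventually constant near $0$), and its two relevant values $g_k(1)=x_1$ and $g_k(0)=x_k$ both lie in $P$, hence in a common path-component, so $g_k\in\XYZ$. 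Finally $g_k\to g$ uniformly: on $\{1,\dots,1/k\}$ the maps agree, and on the tail the discrepancy is bounded by $d(x_k,x_n)\le d(x_k,x)+d(x,x_n)$, which is small once $k$ is large because $x_n\to x$. Thus $g$ is a limit of points of $\XYZ$ that is not itself in $\XYZ$, so $\XYZ$ is not closed and $X$ is not $e$-closed.

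The routine verifications are the continuity of $g_k$ and the uniform estimate, together with the pasting-lemma check that an extension of $g_k$ over $A$ glues continuously to $g_k$ on $N$ at the two endpoints. The only real idea, and the step to get right, is the design of the approximations: one must modify $g$ on a shrinking tail so that, simultaneously, the modified endpoint value $g_k(0)=x_k$ is pulled back into $P$ (making $g_k$ extendable, using $x_k\in P$) while the maps stay uniformly close to $g$ (using $x_n\to x$). The hypothesis that $P$ is not closed is exactly what makes both demands compatible, since it provides points of $P$ converging to a point outside $P$. It is worth recording that the same argument goes through with $Y$ the cone over $N$ in place of the single arc, which may be the more natural framing if one later wants the endpoint values to be joined to a common vertex.
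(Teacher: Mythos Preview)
Your proof is correct and follows essentially the same strategy as the paper's: truncate the sequence so that the value at $0$ is pulled back into $P$, yielding extendible approximants converging uniformly to a non-extendible limit. The only difference is the ambient space---the paper takes $(Y,Z)=([0,1],N)$, so extendibility of each approximant uses paths between \emph{all} consecutive values $x_k$ (still fine, since they all lie in $P$), whereas your single attached arc reduces the extension test to connecting just $g(1)$ and $g(0)$.
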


\begin{proof}
Let $P$ be a path-component of $X$ which is not closed, and $(x_n)$ a sequence in $P$ converging to $x\in X\setminus P$.
Consider the pair $([0,1],N)$.  Define maps $\phi_n$, $\phi$ from $N$ to $X$ by
$$\phi_n(1/k)=x_k\mbox{ for }k\leq n, \phi_n(1/k)=x_n\mbox{ for }k>n, \phi_n(0)=x_n$$
$$\phi(1/k)=x_k\mbox{ for all }k, \phi(0)=x\ .$$
Then $\phi_n\to\phi$ uniformly on $N$.  Each $\phi_n$ is extendible to $[0,1]$, but $\phi$ is not extendible.
\end{proof}

\begin{Ex}\label{Topsin}
Let $X$ be the ``topologist's sine curve''
$$\left \{(x,y):y=\sin\left ( \frac{\pi}{x}\right ) ,0<x\leq1\right \} \cup \left \{(0,y):-1\leq y\leq 1\right \} \subseteq\R^2\ .$$
Then $X$ has a path-component which is not closed, so $X$ is not $e$-closed.

$X$ is also not $e$-open.  Consider the pair $(Y,Z)$, where $Y=X$ and $Z=N\times\{0\}$.  Define $\phi_n$, $\phi$ from $Z$ to $X$ by
$$\phi_n((1/k,0))=(1/k,0)\mbox{ for }k\leq n, \phi_n((1/k,0))=(0,0)\mbox{ for }k>n, \phi_n((0,0))=(0,0)$$
and $\phi$ the inclusion of $Z$ into $X$.  Then $\phi_n\to\phi$ uniformly on $Z$, and $\phi$ is extendible to $Y$ (e.g.\ by the
identity map from $Y$ to $X$), but $\phi_n$ is not extendible to $Y$ for any $n$ since $(1/n,0)$ and $(1/(n+1),0)$ are in the
same path-component of $Y$ but $\phi_n((1/n,0))$ and $\phi_n((1/(n+1),0))$ are not in the same path-component of $X$.
\end{Ex}

\begin{Ex}\label{Comb}
Let $X$ be the ``infinite comb''
$$\left \{ (x,y):x\in N,0\leq y\leq1\right \} \cup \left \{ (x,0):0\leq x\leq1\right \} \subseteq\R^2\ .$$
Then $X$ is path-connected.  But $X$ is not $e$-closed.  Set $Y=[0,1]$ and $Z=N$.  Define $\phi_n$, $\phi$ from $Z$ to $X$ by
$$\phi_n(1/k)=(1/k,1)\mbox{ for }k\leq n, \phi_n(1/k)=(0,1)\mbox{ for }k>n, \phi_n(0)=(0,1)$$
$$\phi(x)=(x,1)\mbox{ for all }x\in N\ .$$
Then $\phi_n\to\phi$ uniformly on $N$, each $\phi_n$ extends in an evident way to a map from $[0,1]$ to $X$ (constant on $\left [ 0,\frac{1}{n+1}\right ]$),
but $\phi$ does not extend to $[0,1]$.

Similarly, the cone over $\N^\dag$ is not $e$-closed.
\end{Ex}

\begin{Prop}\label{Neclosed}
$\N^\dag$ is $e$-closed but not $e$-open.
\end{Prop}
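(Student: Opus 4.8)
The plan is to exploit the total disconnectedness of $\N^\dag$ and reduce all extension questions to the Boolean algebra of clopen sets. First I would record the basic dictionary: a map $g:Y\to\N^\dag$ is exactly the same data as a sequence $(B_n)_{n\ge 1}$ of pairwise disjoint clopen subsets of $Y$, via $B_n=g^{-1}(1/n)$ (with $g=0$ off $\bigcup_n B_n$). Each $B_n$ is clopen because $\{1/n\}$ is clopen in $\N^\dag$; conversely any such sequence yields a continuous $g$, the only point to check being continuity at a $y_0$ with $g(y_0)=0$: for each $m$ the set $B_1\cup\cdots\cup B_{m-1}$ is clopen and omits $y_0$, so its complement is a neighborhood of $y_0$ carried into $\{0\}\cup\{1/k:k\ge m\}$. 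No ``accumulation'' hypothesis on the $B_n$ is needed. From this I would extract the working criterion: writing $A_n=f^{-1}(1/n)$ for $f:Z\to\N^\dag$, the map $f$ extends to $Y$ if and only if each $A_n$ \emph{separately} extends to a clopen subset of $Y$. One direction is immediate; for the other, given clopen $C_n\subseteq Y$ with $C_n\cap Z=A_n$, I would disjointify by setting $B_n=C_n\setminus(C_1\cup\cdots\cup C_{n-1})$, which are clopen and pairwise disjoint, and since the $A_n$ are disjoint we retain $B_n\cap Z=A_n$, so the associated $g$ extends $f$.

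With the criterion in hand, $e$-closedness is quick. Suppose $f_j\to f$ uniformly with every $f_j$ extendible, and fix $n$. Let $\rho_n>0$ be the distance from the isolated point $1/n$ to the nearest other point of $\N^\dag$. Uniform convergence yields $J$ with $d(f_j(z),f(z))<\rho_n$ for all $j\ge J$ and all $z$, which forces $f_j^{-1}(1/n)=f^{-1}(1/n)=A_n$. Since $f_j$ extends, $A_n$ extends to a clopen subset of $Y$; as $n$ was arbitrary, the criterion shows $f$ extends, so $\N^\dag$ is $e$-closed.

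For the failure of $e$-openness I would take $Y=[0,1]$ and $Z=\{0,1\}$. The constant map $f\equiv 0$ extends, yet the maps $f_j$ given by $f_j(0)=1/j$ and $f_j(1)=0$ converge uniformly to $f$ and are non-extendible, because any map from the connected space $[0,1]$ into the totally disconnected $\N^\dag$ is constant and so cannot separate the values $1/j$ and $0$. Hence $f$ is a non-interior point of $\XYZ$, and $\N^\dag$ is not $e$-open.

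The crux is the dictionary and the criterion rather than either conclusion: one must verify that a completely arbitrary disjoint clopen family defines a continuous map (no local finiteness or tail condition being available a priori), and that the disjointification preserves the traces on $Z$. A secondary point to watch is that in the $e$-closed argument the threshold $J$ depends on $n$ through $\rho_n$; this is harmless precisely because the criterion lets us handle each $n$ in isolation.
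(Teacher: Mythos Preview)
Your argument is correct, and for the $e$-closed half it is essentially the paper's proof: the same clopen-set dictionary, the same extendibility criterion (each preimage of an isolated point is the trace on $Z$ of a clopen set in $Y$), the same disjointification $B_n=C_n\setminus(C_1\cup\cdots\cup C_{n-1})$, and the same observation that uniform convergence forces $f_j^{-1}(1/n)=f^{-1}(1/n)$ for large $j$.

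Your counterexample for the failure of $e$-openness is different from, and simpler than, the paper's. The paper takes $Z=N$ and defines $\phi_n(1/k)=n+k$, $\phi_n(0)=\infty$, using the infinite ``tail'' to push everything toward $\infty$; you instead take $Z=\{0,1\}$ and use the connectedness of $[0,1]$ directly to rule out any nonconstant extension. Your version has the advantage of being minimal and transparent; the paper's example, on the other hand, generalizes more readily (as the paper notes immediately afterward) to show that any $e$-open space must have open path components, since in that argument one really uses a sequence of isolated values accumulating at a non-isolated point rather than mere disconnectedness.
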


\begin{proof}
To show that $\N^\dag$ is not $e$-open, let $Y=[0,1]$ and $Z=N$.  Define $\phi_n:Z\to\N^\dag$ by
$$\phi_n(1/k)=n+k\mbox{ for }k\in\N, \phi_n(0)=\infty\ .$$
Then $\phi_n$ converges uniformly to the constant function $\phi$ taking the value $\infty$.  The limit
function $\phi$ is obviously extendible to $[0,1]$ (as a constant function), but no $\phi_n$ is extendible to $[0,1]$.

To show that $\N^\dag$ is $e$-closed, first note that a continuous function from a space $Y$ to $\N^\dag$ is effectively
the same thing as a specification of a pairwise disjoint sequence of clopen sets in $Y$:  the function corresponding to
a sequence $(U_n)$ takes the value $n$ on $U_n$ and $\infty$ on the complement of the union.  If $(Y,Z)$ is a space pair,
a map $\phi:Z\to\N^\dag$ extends to $Y$ if and only if $\phi$ has the {\em clopen intersection property}: each (relatively) clopen subset $\phi^{-1}(\{n\})$ of $Z$ is the intersection
with $Z$ of a clopen set $V_n$ in $Y$.  For the $V_n$ may be converted to disjoint clopen sets
$$U_n=V_n\setminus[\cup_{k<n}V_k]$$
also with the property that $\phi^{-1}(\{n\})=Z\cap U_n$.  If $(\phi_n)$ is a sequence of maps from $Z$ to $\N^\dag$
converging uniformly on $Z$ to a map $\phi$, then for each $k\in\N$ we have $\phi^{-1}(\{k\})=\phi_n^{-1}(\{k\})$
for sufficiently large $n$, so if each $\phi_n$ is extendible, the map $\phi$ satisfies the clopen intersection
property and is thus extendible.
\end{proof}

Essentially the same argument as in the first part of this proof gives:

\begin{Cor}
If $X$ is $e$-open, then the path components of $X$ are open, i.e.\ $X$ is locally path-connected.
\end{Cor}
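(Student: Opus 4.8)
The plan is to prove the corollary directly by showing that, under the hypothesis, no path component of $X$ can fail to be open, closely following the first half of the proof of \ref{Neclosed}. Suppose for contradiction that some path component $P$ of $X$ is not open. Then some $x\in P$ fails to be interior to $P$, so (using metrizability) there is a sequence $(x_n)$ in $X\setminus P$ with $x_n\to x$. Since path components partition $X$, no $x_n$ lies in the path component of $x$. As in \ref{Neclosed}, I would take the space pair $(Y,Z)=([0,1],N)$.

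Next I would write down the relevant maps. Let $\phi\colon N\to X$ be the constant map with value $x$, and for each $n$ define $\phi_n\colon N\to X$ by $\phi_n(1/n)=x_n$ and $\phi_n(t)=x$ for every other point $t\in N$ (including $t=0$). Each $\phi_n$ is continuous, since $1/n$ is isolated in $N$ while $\phi_n$ agrees with the constant $x$ on a neighborhood of $0$. Because $\phi_n$ differs from $\phi$ at the single point $1/n$ only, we have $\sup_{t\in N}d(\phi_n(t),\phi(t))=d(x_n,x)\to 0$, so $\phi_n\to\phi$ uniformly on $N$. This is the exact analogue of the sequence $\phi_n$ used for $\N^\dag$, with the single modified value $x_n$ playing the role of the ``wandering'' value there.

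The heart of the argument is the extendibility analysis. The map $\phi$ is constant, hence extends to $[0,1]$, so $\phi\in\XYZ$. I claim that no $\phi_n$ extends to $[0,1]$: if $f\colon[0,1]\to X$ restricted to $N$ equalled $\phi_n$, then since $1/(n+1)\neq 1/n$ the restriction of $f$ to $[1/(n+1),1/n]$ would be a path in $X$ from $f(1/(n+1))=x$ to $f(1/n)=x_n$, placing $x_n$ in the path component of $x$, contrary to the choice of $x_n$. This is precisely the obstruction exploited in \ref{Neclosed}, where every point of $\N^\dag$ is its own path component; here it is the single point $x_n$, flanked on the subinterval $[1/(n+1),1/n]$ by values equal to $x$, that forces the contradiction. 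Consequently every neighborhood of $\phi$ in $X^N$ contains maps $\phi_n\notin\XYZ$, so $\XYZ$ is not open, contradicting $e$-openness.

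Hence every path component of $X$ is open; since each path component is then an open, path-connected neighborhood of each of its points, $X$ is locally path-connected, as asserted. The only step requiring care is the non-extendibility claim, and I expect no real obstacle there: the whole point of modifying the constant map $\phi$ at exactly one point is to make the forced path on $[1/(n+1),1/n]$, and therefore the contradiction, completely transparent, just as in the $\N^\dag$ argument.
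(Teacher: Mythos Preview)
Your argument is correct and follows essentially the same route the paper indicates: use the space pair $([0,1],N)$, take the constant map $\phi\equiv x$ as the extendible limit, and approximate it by maps that force a path between $x$ and some $x_n\notin P$, which cannot exist. Your choice to perturb $\phi$ at the single isolated point $1/n$ is a clean variant of the literal translation of the $\N^\dag$ construction (which shifts all values), and in fact avoids the minor nuisance that in the general setting distinct $x_k$'s might accidentally share a path component; either way the obstruction on the subinterval $[1/(n+1),1/n]$ is the decisive point.
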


\begin{Ex}\label{Hawaii}
Let $X$ be the ``Hawaiian earring,'' the union over all $n$ of the circles $C_n$ in $\R^2$ of radius $\frac{1}{n}$ centered
at $\left ( \frac{1}{n},0\right )$.  Then $X$ is path-connected and locally path-connected.  But $X$ is not $e$-open.
For let $Y$ be the disk of radius 1 centered at $(1,0)$ and $Z=X\subseteq Y$, and define maps $\phi_n:Z\to X$ by mapping
the $k$'th circle to $(0,0)$ for $k\leq n$ and to itself by the identity map for $k>n$.  Then $(\phi_n)$ converges
uniformly to a constant function $\phi$.  The map $\phi$ extends to $Y$, but no $\phi_n$ extends.
\end{Ex}

\begin{Ex}\label{LocContEx}
Let $X$ be the space of \cite{BorsukEspace} (cf.\ \cite[V.11]{BorsukRetracts}), an infinite-dimensional locally contractible space which is not an ANR.
Using the notation of \cite{BorsukRetracts}, let $Y$ be the Hilbert cube and $Z=X$.  Define a sequence $(\phi_n)$ of continuous
maps from $Z$ to $X$ by
$$\phi_n(x)=\left \{ \begin{array}{ccc} {\left ( \frac{1}{n+1},x_2,x_3,\dots\right )} & \mbox{if} & {x\in\cup_{k=1}^n \dot X_k} \\
x & \mbox{otherwise} & \  \end{array} \right . $$
for $x=(x_1,x_2,x_3,\dots)$, and another map $\phi$ by $\phi((x_1,x_2,x_3,\dots))=(0,x_2,x_3,\dots)$.
Then $\phi_n\to\phi$ uniformly.  The map $\phi$ extends to $Y$ (with the same formula), but no $\phi_n$ extends to $Y$
because the identity map on $\dot X_k$ does not extend to a map from $X_k$ to $X$ for any $k$.
Thus $X$ is not $e$-open.
\end{Ex}

\section{Absolute Neighborhood Retracts}

In this section, we show that ANR's are both $e$-open and $e$-closed.

\smallskip

It will be convenient to use the following characterization of ANR's (cf.\ \cite{BlackadarShape}).  The proof that this condition
is equivalent to the local Tietze extension property is a simple compactness exercise.

\begin{Prop}\label{ANRChar}
Let $X$ be a space.  Then $X$ is an ANR if and only if, whenever $Y$ is a space, $(Z_n)$ a decreasing sequence
of subspaces of $Y$ with $Z=\cap_n Z_n$, and $\phi$ is a map from $Z$ to $X$, then $\phi$ extends to a map from
$Z_n$ to $X$ for sufficiently large $n$.
\end{Prop}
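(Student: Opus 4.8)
The plan is to prove the stated equivalence by passing through the standard characterization of an ANR as a space with the \emph{local Tietze extension property} (LTEP): $X$ has the LTEP if, whenever $Z$ is a subspace of a space $Y$ and $\phi:Z\to X$ is a map, then $\phi$ extends to a map on some \emph{open} neighborhood of $Z$ in $Y$. This equivalence (ANR $\Leftrightarrow$ LTEP) for compact metric spaces is classical (cf.\ \cite{BorsukRetracts, BlackadarShape}), so I would take it as given and reduce the proposition to showing that the decreasing-sequence condition is equivalent to the LTEP. Throughout I would fix a compatible metric $d$ on the ambient space $Y$.

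For the direction from ANR to the stated condition, let $Y$, a decreasing sequence $(Z_n)$ with $Z=\cap_n Z_n$, and a map $\phi:Z\to X$ be given. By the LTEP, $\phi$ extends to a map $\tilde\phi:U\to X$ on some open neighborhood $U$ of $Z$ in $Y$. The crux is a compactness argument: the sets $Z_n\setminus U$ form a decreasing sequence of compact subsets of $Y$ whose intersection is $(\cap_n Z_n)\setminus U=Z\setminus U=\emptyset$, so by the finite intersection property some $Z_N\setminus U$ is empty, i.e.\ $Z_N\subseteq U$. Then for every $n\ge N$ we have $Z\subseteq Z_n\subseteq U$, and $\tilde\phi|_{Z_n}$ is a map on $Z_n$ restricting to $\phi$ on $Z$. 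This is the ``simple compactness exercise'' the statement refers to.

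For the reverse direction, assume the decreasing-sequence condition and let $Z\subseteq Y$ be a subspace with $\phi:Z\to X$ a map; the goal is to extend $\phi$ over some open neighborhood of $Z$. I would set $Z_n=\{y\in Y:d(y,Z)\le 1/n\}$. Each $Z_n$ is a closed subspace of $Y$, the sequence is decreasing, and because $Z$ is closed we have $\cap_n Z_n=\{y:d(y,Z)=0\}=Z$. By hypothesis $\phi$ extends to a map $\psi$ on some $Z_N$, and restricting $\psi$ to the open neighborhood $\{y:d(y,Z)<1/N\}\subseteq Z_N$ of $Z$ yields an open-neighborhood extension of $\phi$. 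Hence $X$ has the LTEP and is therefore an ANR.

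The only substantive step is the compactness argument in the first direction; everything else amounts to moving between the closed neighborhoods $Z_n$ and genuinely open neighborhoods, which is routine in a compact metric space. I do not anticipate a serious obstacle, only two points requiring care: that $\cap_n Z_n=Z$ in the reverse direction genuinely uses the closedness of $Z$, and that the restricted maps in the forward direction extend $\phi$ precisely because $Z\subseteq Z_n$ for all large $n$.
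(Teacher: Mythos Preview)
Your proposal is correct and matches the paper's own approach: the paper does not give a detailed proof but simply asserts that the condition is equivalent to the local Tietze extension property by ``a simple compactness exercise,'' which is exactly the argument you have written out. Both directions are handled properly, including the compactness step $\bigcap_n (Z_n\setminus U)=\emptyset$ forcing $Z_N\subseteq U$, and the reverse direction via the closed $1/n$-neighborhoods of $Z$.
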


The fact that ANR's are $e$-open is a simple consequence of \ref{ANRChar} using a standard argument:

\begin{Prop}\label{ANREop}
Let $X$ be an ANR.  Then $X$ is $e$-open.
\end{Prop}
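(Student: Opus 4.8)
The plan is to prove the contrapositive of openness by a single ``telescope'' construction that feeds into the decreasing-intersection characterization \ref{ANRChar}. Since $X^Z$ is metrizable, $\XYZ$ is open as soon as its complement is sequentially closed, so I would begin by assuming the opposite at the level of sequences: let $\phi\in\XYZ$ have an extension $\Phi\colon Y\to X$, and suppose $\phi=\lim_n\psi_n$ uniformly on $Z$, where no $\psi_n$ extends to $Y$. The goal is to contradict the non-extendibility of the $\psi_n$, which will show that no extendible map lies in the closure of the complement of $\XYZ$.

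Next I would package all of this data into one decreasing sequence of subspaces. Work inside the ambient space $\tilde Y=Y\times N$, set $N_k=\{0\}\cup\{1/j:j\ge k\}$, and define
$$Z_k=(Z\times N)\cup(Y\times N_k).$$
Each $Z_k$ is closed in $\tilde Y$, the sequence is decreasing, and $\bigcap_k Z_k=(Z\times N)\cup(Y\times\{0\})=:Z_\infty$. On $Z_\infty$ define $F$ by $F(y,0)=\Phi(y)$ on $Y\times\{0\}$ and $F(z,1/j)=\psi_j(z)$ on each slice $Z\times\{1/j\}$; on the overlap $Z\times\{0\}$ both prescriptions give $\phi(z)$, so $F$ is well defined as a function.

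The one step requiring care — and the real content of the argument — is the continuity of $F$ along the seam $Z\times\{0\}$, where the slices $Z\times\{1/j\}$ accumulate onto $Y\times\{0\}$. If $(z',1/j)\to(z,0)$, then $F(z',1/j)=\psi_j(z')$, and $d(\psi_j(z'),\phi(z))\le d(\psi_j(z'),\phi(z'))+d(\phi(z'),\phi(z))$; the first term is controlled by the \emph{uniform} convergence $\psi_j\to\phi$ and the second by continuity of $\phi$, so $F(z',1/j)\to\phi(z)=F(z,0)$. This is precisely where uniform (rather than pointwise) convergence is needed, and I expect it to be the only subtle point. Granting continuity, the pasting lemma makes $F$ a map $Z_\infty\to X$, so \ref{ANRChar} applies and $F$ extends to a map $G\colon Z_k\to X$ for some $k$. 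For every $j\ge k$ the slice $Y\times\{1/j\}$ lies in $Z_k$, and $G$ restricted to it is a map $Y\to X$ agreeing with $F=\psi_j$ on $Z\times\{1/j\}$ — that is, $\psi_j$ extends to $Y$, contradicting our choice of the $\psi_n$. Hence $\XYZ$ is open, and $X$ is $e$-open.
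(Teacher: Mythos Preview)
Your proof is correct and is essentially identical to the paper's: the same telescope $\tilde Y=Y\times N$, the same decreasing sequence $Z_k=(Z\times N)\cup(Y\times N_k)$ with intersection $(Z\times N)\cup(Y\times\{0\})$, the same map built from the extension of $\phi$ on the $0$-slice and the $\psi_j$ on the other slices, and the same appeal to Proposition~\ref{ANRChar}. The only differences are cosmetic --- you frame it as a contradiction rather than proving directly that all but finitely many $\psi_n$ extend, and you spell out the continuity check at the seam that the paper leaves to the reader.
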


\begin{proof}
Let $(Y,Z)$ be a space pair, and $\phi_n$, $\phi$ in $X^Z$, with $\phi_n\to\phi$ uniformly on $Z$ and $\phi\in\XYZ$.
We must show $\phi_n\in\XYZ$ for sufficiently large $n$.

Let $\tilde Y=N\times Y$,
$$\tilde Z_n=\left ( \left ( N\cap\left [ 0,\frac{1}{n}\right ] \right ) \times Y\right ) \cup \left ( N\times Z\right )$$
$$\tilde Z=\left ( \{0\}\times Y\right ) \cup \left ( N\times Z\right ) \ .$$
Then $\tilde Z=\cap_n \tilde Z_n$.  Define a function $\psi:\tilde Z\to X$ by
$$\psi((1/n,z))=\phi_n(z),\ \psi((0,y)=\bar\phi(y)$$
where $\bar\phi$ is an extension of $\phi$ to $Y$.  Since $\phi_n\to\phi$ uniformly on $Z$, $\psi$ is continuous.
By \ref{ANRChar}, $\psi$ extends to $\tilde Z_n$ for sufficiently large $n$.  Restricting this extension to
$\{1/n\}\times Y$ gives an extension of $\phi_n$ for large $n$.
\end{proof}

In fact, this proof shows more: If $\bar\phi$ is an extension of $\phi$ to $Y$, then the extensions $\bar\phi_n$
of $\phi_n$ to $Y$ for sufficiently large $n$ can be chosen so that $\bar\phi_n\to\bar\phi$ uniformly on $Y$.

\bigskip

A very important property of ANR's is that sufficiently close maps from a space $Y$ to an ANR $X$ are homotopic (cf.\ \cite[IV.1.1]{HuRetracts}).
Recall that a
homotopy between two elements of $X^Y$ is by definition an element of $X^{Y\times[0,1]}$, but this space is
naturally homeomorphic to $(X^Y)^{[0,1]}$ (), i.e.\ a homotopy from $\phi_0$ to $\phi_1$ is just a continuous path
$(\phi_t)$ in $X^Y$ from $\phi_0$ to $\phi_1$.

\begin{Thm}\label{CloseHomotopic}
Let $X$ be an ANR.  Fix a metric $\rho$ on $X$.  Then there is an $\epsilon>0$ such that, whenever $Y$ is a space
and $\phi_0$, $\phi_1$ maps of $Y$ into $X$ with $\rho(\phi_0(y),\phi_1(y))<\epsilon$ for all $y\in Y$, then
$\phi_0$ and $\phi_1$ are homotopic.  (The $\epsilon$ depends on $X$ and $\rho$, but not on $Y$.)
\end{Thm}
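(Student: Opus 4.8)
The plan is to exploit the classical description of an ANR as a neighborhood retract of a convenient ambient space, together with a straight-line homotopy taken in that ambient space. First I would embed $X$ as a closed subset of the Hilbert cube $Q$, which I regard as a convex subset of the linear space $\R^\N$ equipped with the translation-invariant metric $d(a,b)=\sum_n 2^{-n}|a_n-b_n|$. Since $X$ is an ANR, there is an open neighborhood $U$ of $X$ in $Q$ and a retraction $r\colon U\to X$; both $U$ and $r$ are fixed once and for all, depending only on $X$ (and the chosen embedding), not on any $Y$.

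The next step is to produce the number $\epsilon$. Because $X$ is compact and $U$ is an open neighborhood of it, there is a $\delta>0$ such that the $\delta$-neighborhood $\{q\in Q: d(q,X)<\delta\}$ is contained in $U$. The restricted metric $d|_X$ and the given metric $\rho$ induce the same compact topology on $X$, so the identity map $(X,\rho)\to(X,d|_X)$ is uniformly continuous; hence I can choose $\epsilon>0$ so that $\rho(x,x')<\epsilon$ implies $d(x,x')<\delta$ for all $x,x'\in X$. This $\epsilon$ depends only on $X$, $\rho$, the embedding, and the retraction --- in particular not on $Y$, which is exactly the uniformity the statement demands.

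Now, given any space $Y$ and maps $\phi_0,\phi_1\colon Y\to X$ with $\rho(\phi_0(y),\phi_1(y))<\epsilon$ for all $y$, I would form the straight-line homotopy in $Q$,
\[
H(y,t)=(1-t)\phi_0(y)+t\phi_1(y),
\]
which is continuous as a map $Y\times[0,1]\to Q$ and lands in $Q$ by convexity. The key point is that $H$ in fact lands in $U$: for this metric $d\big(H(y,t),\phi_0(y)\big)=t\,d(\phi_0(y),\phi_1(y))\le d(\phi_0(y),\phi_1(y))<\delta$, so every value of $H$ lies within $\delta$ of a point of $X$ and is therefore in $U$. Composing with the retraction, $r\circ H\colon Y\times[0,1]\to X$ is continuous, and since $r$ fixes $X$ pointwise it restricts at $t=0$ and $t=1$ to $\phi_0$ and $\phi_1$ respectively. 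Thus $r\circ H$ is the desired homotopy.

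I expect the only genuinely delicate point to be the interplay between the two metrics: closeness is measured in $\rho$, while the straight-line homotopy and its containment in $U$ are naturally controlled in the ambient metric $d$. The step reconciling them is the uniform continuity of the identity on the compact space $X$, which is precisely what lets a single $\epsilon$, independent of $Y$, suffice. Everything else --- the existence of the neighborhood retraction, the continuity of the straight-line homotopy, and the estimate $d\big(H(y,t),\phi_0(y)\big)\le d(\phi_0(y),\phi_1(y))$ --- is routine once the convex ambient structure of the Hilbert cube is in place.
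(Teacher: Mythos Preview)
Your argument is correct and is essentially the standard textbook proof of this fact. Note, however, that the paper does not actually give its own proof of this theorem: it is stated as a known result with a reference to \cite[IV.1.1]{HuRetracts}, and is then used as a tool in the proof of Theorem~\ref{ANREclosed}. Your embedding-and-retraction argument via the Hilbert cube, with the straight-line homotopy pushed back by the retraction, is exactly the classical route; the one point you flag as delicate---passing from the given metric $\rho$ to the ambient metric $d$ via uniform continuity on the compact $X$---is handled correctly and is what makes the $\epsilon$ independent of $Y$.
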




We can prove a slightly weaker statement for general $e$-open spaces:

\begin{Thm}\label{EopHomotopy}
Let $X$ be an $e$-open space, $Y$ a space, and $\phi_n,\phi\in X^Y$ with $\phi_n\to\phi$ uniformly on $Y$.
Then $\phi_n$ is homotopic to $\phi$ for sufficiently large $n$.  In fact, there is a continuous path $\psi_t$,
$0\leq t\leq\frac{1}{n}$ for some $n$, such that $\psi_{1/k}=\phi_k$ for all $k\geq n$ and $\psi_0=\phi$.
\end{Thm}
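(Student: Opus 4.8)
The plan is to deduce the entire statement---both the bare homotopy claim and the refined ``in fact'' assertion---from a \emph{single} application of the hypothesis that $X$ is $e$-open, to one carefully chosen space pair and one carefully chosen convergent sequence. The pair will be $(Y\times[0,1],\,Y\times N)$, where $N=\{0\}\cup\{1/k:k\in\N\}\subseteq[0,1]$; since $N$ is closed in $[0,1]$, the set $Y\times N$ is a (closed) subspace of $Y\times[0,1]$, so this is a legitimate space pair. Fix a metric $\rho$ on $X$.

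First I would package the data $(\phi_k)$ and $\phi$ into maps on $Y\times N$. Define the constant-in-$t$ map $g:Y\times N\to X$ by $g(y,s)=\phi(y)$; this manifestly extends to $Y\times[0,1]$ (by the same formula), so $g$ lies in $X^{(Y\times N)\ua(Y\times[0,1])}$. Then, for each $m$, define an approximant $g_m:Y\times N\to X$ by $g_m(y,1/k)=\phi_k(y)$ for $k\ge m$, by $g_m(y,1/k)=\phi(y)$ for $k<m$, and by $g_m(y,0)=\phi(y)$. Two routine checks are needed: each $g_m$ is continuous (the only accumulation in $Y\times N$ is along $Y\times\{0\}$, where the uniform convergence $\phi_k\to\phi$ together with continuity of $\phi$ forces continuity), and $g_m\to g$ uniformly, because $\sup_{(y,s)\in Y\times N}\rho(g_m(y,s),g(y,s))=\sup_{k\ge m}\sup_{y\in Y}\rho(\phi_k(y),\phi(y))\to 0$.

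Now apply the defining property of $e$-openness to the pair $(Y\times[0,1],\,Y\times N)$: the set $X^{(Y\times N)\ua(Y\times[0,1])}$ is open in $X^{Y\times N}$ and contains the limit $g$, so it contains $g_m$ for all large $m$; fix one such $m$ and call it $n$. Thus $g_n$ extends to a map $G:Y\times[0,1]\to X$. Restricting $G$ to $Y\times[0,1/n]$ and setting $\psi_t(y)=G(y,t)$ produces exactly the desired path: $\psi_0=\phi$, and for every $k\ge n$ we have $\psi_{1/k}(y)=G(y,1/k)=g_n(y,1/k)=\phi_k(y)$, i.e.\ $\psi_{1/k}=\phi_k$. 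Reading this path from $t=0$ out to $t=1/k$ gives a homotopy from $\phi$ to $\phi_k$ for each $k\ge n$, which yields the first assertion as a corollary of the refined one.

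The step I expect to require the most care---and which is really the whole idea---is the choice of the approximating sequence $(g_m)$. The naive truncation that freezes the ``head'' $\{k<m\}$ at the value $\phi_m$ does \emph{not} converge uniformly to $g$, since the early terms $\phi_1,\phi_2,\dots$ remain a fixed positive distance from $\phi$; the argument succeeds precisely because we collapse the head to $\phi$ while keeping the tail faithful to $(\phi_k)$. Everything else is routine bookkeeping: verifying that the pair is a space pair, that each $g_m$ is continuous, and that uniform convergence $g_m\to g$ reduces to the hypothesis $\phi_k\to\phi$. Note that $e$-openness enters essentially, in the one invocation of openness; without it the conclusion genuinely fails (for instance for $\N^\dag$, which is not $e$-open).
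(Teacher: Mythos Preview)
Your proof is correct and follows essentially the same approach as the paper's own proof: the same space pair $(Y\times[0,1],\,Y\times N)$, the same limit map constant in the $N$-coordinate, and the same truncated approximants that collapse the head to $\phi$ while keeping the tail equal to $\phi_k$. The only differences are cosmetic (factor order in the product, $k\ge m$ versus $k>n$), and you supply a few more routine verifications than the paper does.
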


\begin{proof}
Let $\tilde Y=[0,1]\times Y$, and $\tilde Z=N\times Y\subseteq \tilde Y$.  Define a sequence $\tilde\phi_n$ of
functions from $\tilde Z$ to $X$ by
$$\tilde\phi_n((1/k,y))=\phi_k(y)\mbox{ for }k>n,\ \tilde\phi_n(1/k,y)=\phi(y)\mbox{ for }k\leq n,\ \phi_n((0,y))=\phi(y)$$
and define $\tilde\phi:\tilde Z\to X$ by $\tilde\phi((x,y))=\phi(y)$ for all $x\in N$.  Then $(\tilde\phi_n)$ is
a sequence in $X^{\tilde Z}$ converging uniformly to $\tilde\phi$, and $\tilde\phi$ extends to $\bar\phi:\tilde Y\to X$
defined by $\bar\phi((t,y))=\phi(y)$ for all $t\in[0,1]$, $y\in Y$.  Thus, since $X$ is $e$-open, $\tilde\phi_n$
extends to $\tilde Y$ for sufficiently large $n$, defining the desired homotopy.
\end{proof}

A better result will be shown later (\ref{EopLocEqui}).

Showing that an ANR is $e$-closed is somewhat more complicated than the proof that it is $e$-open:

\begin{Thm}\label{ANREclosed}
Every ANR is $e$-closed.
\end{Thm}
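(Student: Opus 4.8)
The plan is to verify the definition of $e$-closed directly. Fix a space pair $(Y,Z)$. Since $X^Z$ is metrizable, $\XYZ$ is closed as soon as it is sequentially closed, so it suffices to take a sequence $(\phi_n)$ in $\XYZ$ converging uniformly on $Z$ to some $\phi\in X^Z$ and to produce an extension of $\phi$ to $Y$. For each $n$ fix an extension $\bar\phi_n\in X^Y$ of $\phi_n$. The difficulty is that the $\bar\phi_n$ carry no control away from $Z$ and need not converge, so I will not try to take a limit of the $\bar\phi_n$; instead I will transport a single extension across a homotopy.

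Fix a metric $\rho$ on $X$ and let $\epsilon>0$ be the constant furnished by \ref{CloseHomotopic}. Because $\phi_n\to\phi$ uniformly on $Z$, I may choose a single index $n$ with $\rho(\phi_n(z),\phi(z))<\epsilon$ for all $z\in Z$. Applying \ref{CloseHomotopic} with the space taken to be $Z$ and the two close maps $\phi_n,\phi\colon Z\to X$ yields a homotopy $H\colon Z\times[0,1]\to X$ with $H_0=\phi_n$ and $H_1=\phi$. Now $\bar\phi_n$ is a map of $Y$ into $X$ whose restriction to $Z$ is $H_0$. If the homotopy $H$ of $\bar\phi_n|_Z$ can be extended to a homotopy $\tilde H\colon Y\times[0,1]\to X$ with $\tilde H_0=\bar\phi_n$, then the terminal map $\tilde H_1\colon Y\to X$ satisfies $\tilde H_1|_Z=H_1=\phi$, so $\tilde H_1$ extends $\phi$ and $\phi\in\XYZ$, completing the argument.

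Thus everything reduces to the homotopy extension property for maps into $X$: for the closed pair $(Y,Z)$, a map $\bar\phi_n\in X^Y$ together with a homotopy of $\bar\phi_n|_Z$ extends to a homotopy of $\bar\phi_n$. This is the step I expect to be the main obstacle, though it is standard for ANR targets (cf.\ \cite{HuRetracts}) and can be derived from the neighborhood--extension form of the ANR property (equivalently from \ref{ANRChar}). Concretely, the maps $\bar\phi_n$ and $H$ glue to a single map $g$ on the closed subset $A=(Y\times\{0\})\cup(Z\times[0,1])$ of $Y\times[0,1]$; since $X$ is an ANR, $g$ extends to a map $G$ on some open neighborhood $U$ of $A$. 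The tube lemma gives an open $V\supseteq Z$ in $Y$ with $V\times[0,1]\subseteq U$, and a Urysohn function $\lambda\colon Y\to[0,1]$ with $\lambda\equiv1$ on $Z$ and $\lambda\equiv0$ off $V$ lets me define $r\colon Y\times[0,1]\to U$ by $r(y,t)=(y,\lambda(y)t)$. Setting $\tilde H=G\circ r$ gives a homotopy with $\tilde H_0=\bar\phi_n$ and $\tilde H|_{Z\times[0,1]}=H$, as required. The only points needing care are the verification that $r$ indeed lands in $U$ (using $V\times[0,1]\subseteq U$ where $\lambda\neq0$, and $Y\times\{0\}\subseteq A\subseteq U$ where $\lambda=0$) and the continuity of the glued map $g$, both of which are routine.
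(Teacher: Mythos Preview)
Your argument is correct. Both proofs use the same three ingredients---the ANR neighborhood-extension property, Theorem~\ref{CloseHomotopic}, and a Urysohn function---but the order is reversed. The paper first uses the ANR property to extend $\phi$ itself to a neighborhood $U$ of $Z$, then applies Theorem~\ref{CloseHomotopic} on a slightly smaller $\bar V$ to homotope $\bar\phi_n|_{\bar V}$ to this partial extension $\tilde\phi$, and finally splices the homotopy into $\bar\phi_n$ via a Urysohn function. You instead apply Theorem~\ref{CloseHomotopic} only on $Z$ to get a homotopy $\phi_n\simeq\phi$, and then invoke (and prove inline) Borsuk's homotopy extension theorem for ANR targets to push that homotopy out to all of $Y$. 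Your route has the advantage of isolating a clean, reusable lemma (the HEP for closed pairs with ANR target); the paper's route is a touch more direct in that it never needs the tube lemma and keeps the homotopy on the smaller set $\bar V$ rather than on all of $Y\times[0,1]$. Substantively the two arguments are equivalent rearrangements of the same construction.
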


\begin{proof}
Let $X$ be an ANR, fix a metric $\rho$ on $X$, and let $\epsilon>0$ be the number given by \ref{CloseHomotopic}.
Let $(Y,Z)$ be a space pair, and $(\phi_n)$ a sequence in $X^Z$ converging uniformly on $Z$ to $\phi:Z\to X$.
Suppose each $\phi_n$ extends to $Y$.  We must show that $\phi$ also extends to $Y$.

Since $X$ is an ANR, $\phi$ extends to a neighborhood $U$ of $Z$.  Choose $n$ so that $\rho(\phi_n(z),\phi(z))<\frac{\epsilon}{2}$
for all $z\in Z$.  Let $\tilde\phi$ be an extension of $\phi$ to $U$, and $\bar\phi_n$ an extension of $\phi_n$ to $Y$.
Then there is an open neighborhood $V$ of $Z$ contained in $U$ such that $\rho(\bar\phi_n(y),\tilde\phi(y))<\epsilon$ for all $y\in\bar V$.
Thus, by \ref{CloseHomotopic}, $\bar\phi_n$ and $\tilde\phi$ are homotopic as maps from $\bar V$ to $X$.
Let $(\psi_t)$ be a homotopy with $\psi_0=\bar\phi_n$ and $\psi_1=\tilde\phi$ on $\bar V$.

Since $Y$ is normal, there is a map $f:Y\to[0,1]$ with $f\equiv 1$ on $Z$ and $f\equiv 0$ on $Y\setminus V$.
Define $\bar\phi:Y\to X$ by
$$\bar\phi(y)=\left \{ \begin{array}{ccc} {\psi_{f(y)}(y)} & \mbox{if} & {y\in\bar V} \\ \, & \, & \, \\ \bar\phi_n(y) & \mbox{if} & {y\notin\bar V} \end{array} \right . \ .$$
It is easy to check that $\bar\phi$ is continuous and extends $\phi$ to $Y$.
\end{proof}

\section{Local Extendibility}

In this section, we consider the converse of \ref{ANREop}: is an $e$-open space necessarily an ANR?
The next local extendibility result comes close to showing this.

\begin{Thm}\label{EopLocExt}
Let $X$ be an $e$-open space.  Fix a metric $\rho$ on $X$.  Then for any $\epsilon>0$ there is a $\delta>0$
such that, whenever $(Y,Z)$ is a space pair and $\phi$ is a map from $Z$ to $X$ with $diam(\phi(Z))<\delta$,
$\phi$ extends to a map $\bar\phi:Y\to X$ with $diam(\bar\phi(Y))<\epsilon$.  (The $\delta$ depends only
on $X$, $\rho$, and $\epsilon$, not on $Y$ or $Z$.)
\end{Thm}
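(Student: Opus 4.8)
The plan is to argue by contradiction and to package countably many hypothetical ``bad'' pairs into a single space pair, so that the failure of a uniform $\delta$ produces a sequence in some $X^{\tilde Z}$ converging to an extendible map whose terms fail to extend with controlled diameter, contradicting that $X$ is $e$-open.

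Fix $\epsilon>0$ and suppose no $\delta$ works. Then for each $n$ the value $\delta=1/n$ fails, so there is a space pair $(Y_n,Z_n)$ and a map $\phi_n:Z_n\to X$ with $diam(\phi_n(Z_n))<1/n$ admitting no extension $\bar\phi_n:Y_n\to X$ with $diam(\bar\phi_n(Y_n))<\epsilon$ (in particular, possibly no extension at all). Discarding the $n$ with $Z_n=\emptyset$, I would pick $x_n\in\phi_n(Z_n)$ and, using compactness of $X$, pass to a subsequence (and relabel) so that $x_n\to x_0$ for some $x_0\in X$. Since $diam(\phi_n(Z_n))\to 0$, this gives $\phi_n(Z_n)\subseteq B(x_0,r_n)$ with $r_n\to 0$.

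Next I would form the one-point compactifications $\tilde Y=(\coprod_n Y_n)^\dag$ and its closed subspace $\tilde Z=\{\infty\}\cup\coprod_n Z_n$, both again spaces (the disjoint union of the compact metrizable $Y_n$ is locally compact, $\sigma$-compact and second countable, so its one-point compactification is compact metrizable). Let $\psi\equiv x_0$ on $\tilde Z$, which extends to $\tilde Y$ as a constant, and define $\Phi_m:\tilde Z\to X$ by $\Phi_m=\phi_n$ on $Z_n$ for $n\geq m$, $\Phi_m=x_0$ on $Z_n$ for $n<m$, and $\Phi_m(\infty)=x_0$. Because $\sup_{n\geq m}r_n\to 0$, each $\Phi_m$ is continuous (including at $\infty$) and $\Phi_m\to\psi$ uniformly on $\tilde Z$. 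Since $X$ is $e$-open and $\psi$ extends to $\tilde Y$, the maps $\Phi_m$ extend for all sufficiently large $m$; fix such an $m_0$ and an extension $\bar\Phi:\tilde Y\to X$ of $\Phi_{m_0}$.

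The crux — and the step that actually delivers the diameter bound, which I expect to be the main obstacle — is the behaviour of $\bar\Phi$ near $\infty$. Since $\bar\Phi$ restricts to $\Phi_{m_0}$ on $\tilde Z$, we have $\bar\Phi(\infty)=x_0$, so $\bar\Phi$ is continuous at $\infty$. A neighborhood of $\infty$ in $\tilde Y$ is the complement of a compact set, hence contains $\coprod_{n>N}Y_n$ for some $N$ (a compact subset of the disjoint union meets only finitely many $Y_n$). Choosing the neighborhood to map into $B(x_0,\epsilon/2)$, I would conclude $\bar\Phi(Y_n)\subseteq B(x_0,\epsilon/2)$, and in particular $diam(\bar\Phi(Y_n))<\epsilon$, for all large $n$. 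For any such $n\geq m_0$ the restriction $\bar\Phi|_{Y_n}$ is an extension of $\phi_n$ of diameter $<\epsilon$, contradicting the choice of $(Y_n,Z_n,\phi_n)$. This contradiction yields the desired $\delta$.
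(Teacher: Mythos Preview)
Your proposal is correct and is essentially the same argument as the paper's: argue by contradiction, package the bad pairs into the one-point compactification of their disjoint union, approximate the constant map by maps carrying the original data, use $e$-openness to extend, and derive the diameter bound from continuity of the extension at $\infty$. Your write-up is slightly more explicit about metrizability of the one-point compactification and the neighborhood-of-$\infty$ argument, while the paper phrases the last step via sequences $y_k\to\infty$, but the substance is identical.
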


\begin{proof}
Suppose the result is false.  Then there is an $\epsilon>0$ such that, for every $n$, there is a space pair
$(Y_n,Z_n)$ and a map $\phi_n:Z_n\to X$ with $diam(\phi_n(Z_n))<\frac{1}{n}$ but such that $\phi_n$ has no
extension $\bar\phi_n$ to $Y_n$ with $diam(\bar\phi_n(Y_n))<\epsilon$.

Let $z_n\in Z_n$.  Passing to a subsequence, we may assume the sequence $(\phi_n(z_n))$ converges in $X$
to a point $p$.  Since $diam(\phi_n(Z_n))\to0$, if $w_n$ is any point of $Z_n$ for each $n$ we have
$\phi_n(w_n)\to p$.

Let $Y$ be the one-point compactification of the disjoint union of the $Y_n$, and regard $Y_n$ as a subspace of $Y$.
Let $Z$ be the union of the $Z_n$ and the point at infinity in $Y$.  Define maps $\tilde\phi_n$ from $Z$ to $X$ as follows:
$$\tilde\phi_n(z)=\left \{ \begin{array}{ccc} \phi_k(z) & \mbox{ if } & {z\in Z_k,\ k>n} \\ p & \mbox{ if } & {z\in Z_k,\ k\leq n} \\
p & \mbox{ if } & z=\infty \end{array} \right . \ .$$
Then $(\tilde\phi_n)$ converges uniformly on $Z$ to the constant function with value $p$.  Since this function is
extendible to $Y$ (e.g.\ to a constant function), $\tilde\phi_n$ extends to $Y$ for some $n$ (any sufficiently large $n$), giving extensions
$\bar\phi_k$ of $\phi_k$ to $Y_k$ for all  $k>n$.  If $y_k$ is any point of $Y_k$ for each $k>n$,
then $y_k\to\infty$ in $Y$, so $\bar\phi_k(y_k)\to p$ for any such sequence.  It follows that $diam(\bar\phi_k(Y_k))\to0$,
a contradiction.
\end{proof}

The result may be rephrased as a ``local AR'' property:

\begin{Cor}\label{EopLocalAR}
Let $X$ be an $e$-open space, and $p\in X$.  For every neighborhood $U$ of $p$ there is a neighborhood $V$ of $p$
such that, whenever $(Y,Z)$ is a space pair and $\phi:Z\to V$ is a map, then $\phi$ extends to a map $\bar\phi:Y\to U$.
\end{Cor}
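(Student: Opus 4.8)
The plan is to read off the corollary directly from \ref{EopLocExt}, translating the diameter estimate there into a statement about $\rho$-balls around $p$. Fix a metric $\rho$ on $X$ and write $B(p,r)$ for the open ball of radius $r$. Since $U$ is an open neighborhood of $p$, I first choose $\epsilon_0>0$ with $B(p,\epsilon_0)\subseteq U$; this replaces the abstract neighborhood $U$ by a concrete ball, so that ``landing in $U$'' becomes a distance estimate.

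Next I apply \ref{EopLocExt} with $\epsilon=\epsilon_0/2$ to obtain a $\delta>0$. Shrinking $\delta$ if necessary I may assume $\delta\le\epsilon_0/2$; this is harmless, since if $diam(\phi(Z))$ is below a smaller threshold it is in particular below the original $\delta$, so the conclusion of the theorem still applies. I then set $V=B(p,\delta/3)$, which is a neighborhood of $p$. For any space pair $(Y,Z)$ and any map $\phi:Z\to V$, any two values of $\phi$ lie within $\delta/3$ of $p$, whence $diam(\phi(Z))\le 2\delta/3<\delta$; so \ref{EopLocExt} yields an extension $\bar\phi:Y\to X$ with $diam(\bar\phi(Y))<\epsilon_0/2$.

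The one genuine point is that \ref{EopLocExt} controls only the size of $\bar\phi(Y)$, not its location, so I must still anchor the image near $p$ in order to force $\bar\phi(Y)\subseteq U$. Assuming $Z\neq\emptyset$ (if $Z=\emptyset$ the constant map $y\mapsto p$ already extends $\phi$ into $U$), pick $z_0\in Z$, so that $\bar\phi(z_0)=\phi(z_0)\in V$ lies within $\delta/3$ of $p$. Then for every $y\in Y$,
$$\rho(\bar\phi(y),p)\le\rho(\bar\phi(y),\bar\phi(z_0))+\rho(\bar\phi(z_0),p)<\frac{\epsilon_0}{2}+\frac{\delta}{3}\le\frac{\epsilon_0}{2}+\frac{\epsilon_0}{6}<\epsilon_0,$$
so $\bar\phi(y)\in B(p,\epsilon_0)\subseteq U$ and $\bar\phi$ is the required extension. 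Beyond this bookkeeping of constants I anticipate no obstacle, since the corollary is essentially a repackaging of \ref{EopLocExt}; the only things to watch are the legitimacy of shrinking $\delta$ and the anchoring that converts a diameter bound into set membership.
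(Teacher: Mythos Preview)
Your proof is correct and is exactly the intended argument: the paper states the corollary without proof, calling it merely a rephrasing of \ref{EopLocExt}, and your choice of $V=B(p,\delta/3)$ together with the anchoring step is precisely how one converts the diameter bound of \ref{EopLocExt} into the neighborhood statement here (the same choice $V=B_{\delta/3}(p)$ appears in the paper's proof of \ref{EopLocContr}). The only detail you add beyond the paper is the explicit anchoring via a point $z_0\in Z$ and the handling of $Z=\emptyset$, both of which are routine and correctly done.
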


In particular:

\begin{Cor}\label{EopLocContr}
Let $X$ be an $e$-open space. Then $X$ is locally contractible.  In fact, if $p\in X$ and $U$ is any neighborhood
of $p$ in $X$, then there is a neighborhood $V$ of $p$ which can be contracted to $p$ relative to $p$ within $U$.
\end{Cor}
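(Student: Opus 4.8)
The plan is to deduce this directly from the ``local AR'' property \ref{EopLocalAR} by encoding a contraction as a single extension problem over a cylinder. The point is that a contraction of a neighborhood $W$ of $p$ to $p$ relative to $p$, taking place inside $U$, is precisely a map $H:W\times[0,1]\to U$ that restricts to the inclusion on $W\times\{0\}$, to the constant map $p$ on $W\times\{1\}$, and to the constant map $p$ on $\{p\}\times[0,1]$. So I want to prescribe these values on three faces of the cylinder and invoke \ref{EopLocalAR} to extend them over the whole cylinder.

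First I would apply \ref{EopLocalAR} to the given neighborhood $U$ of $p$ to obtain a neighborhood $V_0$ of $p$ with the stated extension property. Since $X$ is compact metrizable, I can then choose a \emph{closed} neighborhood $W$ of $p$ with $W\subseteq V_0$ (for instance a sufficiently small closed metric ball about $p$); being closed in the compact space $X$, $W$ is itself a space in the sense of our conventions.

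Next I would form the space pair $(Y,Z)$ with $Y=W\times[0,1]$ and
$$Z=(W\times\{0\})\cup(W\times\{1\})\cup(\{p\}\times[0,1]),$$
a closed subspace of the compact space $Y$. Define $\phi:Z\to X$ by $\phi(w,0)=w$ and $\phi(w,1)=p$ for $w\in W$, and $\phi(p,t)=p$ for $t\in[0,1]$. The three pieces agree on their overlaps (both assignments give $p$ at $(p,0)$ and at $(p,1)$), so $\phi$ is continuous, and its image is $W\subseteq V_0$. By the choice of $V_0$ via \ref{EopLocalAR}, $\phi$ extends to a map $\bar\phi:Y\to U$. Setting $H(w,t)=\bar\phi(w,t)$ then exhibits $H$ as a contraction of $W$ to $p$ relative to $p$ inside $U$, so $W$ is the desired neighborhood of $p$ and local contractibility follows.

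There is no serious obstacle here: the only points requiring a little care are checking that $Z$ is closed and that $\phi$ is continuous at the two seam points $(p,0)$ and $(p,1)$, together with the observation that the extension $\bar\phi$ \emph{automatically} has the two endpoint properties of a contraction because those were built into the boundary data on $W\times\{0\}$ and $W\times\{1\}$ (and the fixed-point condition into the data on $\{p\}\times[0,1]$). All the substantive content is carried by \ref{EopLocalAR} (equivalently \ref{EopLocExt}); this corollary is just the remark that a relative contraction is a single extension problem over the cylinder.
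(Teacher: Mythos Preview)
Your argument is correct and follows essentially the same route as the paper: encode the relative contraction as a single extension problem over a cylinder and invoke the local extension property. The only cosmetic difference is that the paper collapses the top face to a point (working with the cone over $\bar V$ and the ``spiked base'' $Z=(\bar V\times\{0\})\cup(\{p\}\times[0,1])$) rather than prescribing the constant value $p$ on $W\times\{1\}$ as you do; the two formulations are equivalent.
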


\begin{proof}
Fix a metric $\rho$ on $X$, and an $\epsilon>0$ such that the open ball $B_\epsilon(p)$ of radius $\epsilon$ around $p$
is contained in $U$.  Let $\delta$ be as in \ref{EopLocExt}, and let $V$ be a neighborhood of $p$ of diameter $<\delta$
(e.g.\ $V=B_{\delta/3}(p)$).  Let $Y$ be the cone over $\bar V$, i.e.\
$$Y=\bar V\times[0,1]/\sim$$
where $\sim$ identifies $\bar V\times\{1\}$ to a single point,
and let $Z\subseteq Y$ be the ``spiked base'' over $p$:
$$Z=\left ( \bar V\times\{0\}\right ) \cup \left ( \{p\}\times [0,1]\right ) \ .$$
There is an obvious map of $Z$ into $X$ which is the identity map on the base and sends the spike to $p$.
By \ref{EopLocExt} this map can be extended to a map from $Y$ to $U$, giving the desired homotopy.
\end{proof}

Combining this with \cite[V.10.4]{BorsukRetracts}, we obtain:

\begin{Cor}\label{FDEop}
A finite-dimensional space which is $e$-open is an ANR.
\end{Cor}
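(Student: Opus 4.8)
The plan is to deduce this immediately from the local contractibility established in \ref{EopLocContr} together with the classical characterization of finite-dimensional ANR's due to Borsuk, namely \cite[V.10.4]{BorsukRetracts}: a finite-dimensional space is an ANR if and only if it is locally contractible. Thus the corollary is essentially a one-line consequence of work already done, with all the real content sitting in \ref{EopLocExt} and its reformulation \ref{EopLocContr}.

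Concretely, I would first invoke \ref{EopLocContr} to conclude that the $e$-open space $X$ is locally contractible; indeed that corollary supplies the strong form in which, for each point $p$ and each neighborhood $U$ of $p$, some smaller neighborhood $V$ can be contracted to $p$ within $U$. Since $X$ is by hypothesis finite-dimensional (and, by our standing conventions, compact metrizable), the cited theorem then applies directly to yield that $X$ is an ANR, completing the argument.

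The only point requiring a moment's care is to confirm that the form of local contractibility produced by \ref{EopLocContr} matches the hypothesis of \cite[V.10.4]{BorsukRetracts}. Because \ref{EopLocContr} delivers a based, controlled contraction---a contraction of the small neighborhood $V$ to the point $p$ taking place inside the prescribed neighborhood $U$---it is at least as strong as the local contractibility demanded in Borsuk's theorem, so no genuine gap arises here. In short, there is no serious obstacle: the substantive analysis has already been carried out, and this corollary amounts to combining \ref{EopLocContr} with a single citation.
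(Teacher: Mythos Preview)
Your proposal is correct and matches the paper's own argument exactly: the paper simply states that the corollary follows by combining \ref{EopLocContr} with \cite[V.10.4]{BorsukRetracts}, which is precisely what you do. No additional work is needed.
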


Example \ref{LocContEx} provides evidence that the finite-dimensionality assumption is not necessary.  In fact,
\ref{EopLocExt} comes quite close to showing the Lefschetz condition (cf.\ \cite[V.8]{BorsukRetracts}) which characterizes ANR's: roughly,
$X$ satisfies the Lefschetz condition if, whenever $K$ is a polyhedron and $\phi$ a map of the vertices of $K$
into $X$ with the images of adjacent vertices sufficiently close together, then $\phi$ extends to a map from $K$ to $X$.
\ref{EopLocExt} can be applied successively to extend $\phi$ to the $k$-skeleton for each $k$, showing that the
Lefschetz condition is satisfied for each $n$ for polyhedra of dimension $\leq n$; however, the $\delta$
needed will depend on $n$ and not just on $\epsilon$, so the full Lefschetz condition is not obtained from \ref{EopLocExt}.

\bigskip

There are various alternative characterizations of local equiconnectedness (cf.\ \cite{FoxFibre2}, \cite{DugundjiLocal}, \cite{HimmelbergTheorems}).
Using one of these, we can also obtain:

\begin{Cor}\label{EopLocEqui}
An $e$-open space is locally equiconnected.
\end{Cor}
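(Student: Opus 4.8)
The plan is to reduce local equiconnectedness to a local extension problem that \ref{EopLocExt} solves directly, and then to invoke the local-to-global direction of one of the characterizations cited just before the statement. Recall that a \emph{local equiconnecting function} at a point $p$ consists of a neighborhood $N$ of $p$ together with a map $\lambda:N\times N\times[0,1]\to X$ satisfying $\lambda(x,y,0)=x$, $\lambda(x,y,1)=y$, and $\lambda(x,x,t)=x$ for all $t$. The substance of \cite{FoxFibre2}, \cite{DugundjiLocal}, \cite{HimmelbergTheorems} is that $X$ is locally equiconnected precisely when such local equiconnecting functions exist at every point (equivalently, when the diagonal $\Delta\subseteq X\times X$ is locally a cofibration over a numerable---here finite---cover). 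So it suffices to produce, for each $p$, a local equiconnecting function whose image lies in a preassigned small neighborhood of $p$.

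First I would fix $\epsilon>0$ and take $\delta>0$ as in \ref{EopLocExt}, then choose a closed neighborhood $\bar N$ of $p$ with $diam(\bar N)<\delta$. Set $Y=\bar N\times\bar N\times[0,1]$ and let $Z\subseteq Y$ be the closed subspace $(\bar N\times\bar N\times\{0,1\})\cup(\Delta_N\times[0,1])$, where $\Delta_N=\{(x,x):x\in\bar N\}$. Define $\phi:Z\to X$ by $\phi(x,y,0)=x$, $\phi(x,y,1)=y$, and $\phi(x,x,t)=x$. The three prescriptions agree on the overlaps $\Delta_N\times\{0\}$ and $\Delta_N\times\{1\}$ and are continuous on each closed piece, so $\phi$ is a well-defined map, and $diam(\phi(Z))\le diam(\bar N)<\delta$. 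By \ref{EopLocExt}, $\phi$ extends to $\bar\phi:Y\to X$ with $diam(\bar\phi(Y))<\epsilon$. Restricting $\bar\phi$ to $N'\times N'\times[0,1]$ for an open $N'$ with $p\in N'\subseteq\bar N$ gives a local equiconnecting function $\lambda$ at $p$ whose image lies within $\epsilon$ of $\bar N$, hence inside any preassigned neighborhood of $p$ once $\epsilon$ and $\bar N$ are small enough. The point to emphasize is that, because the constant data $\phi(x,x,t)=x$ on $\Delta_N\times[0,1]$ is part of the prescription that $\bar\phi$ must extend, the resulting homotopy is automatically stationary on the diagonal; this is exactly the feature distinguishing local equiconnectedness from the mere local contractibility already obtained in \ref{EopLocContr}, and it is free here because the diagonal strip was built into $Z$.

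With local equiconnecting functions in hand at every point, I would finish by quoting the local-to-global part of the chosen characterization. I expect this gluing step to be the main obstacle: the naive attempt to combine the overlapping local functions $\lambda_i:N_i\times N_i\times[0,1]\to X$ by a partition of unity fails because $X$ carries no linear structure in which to average paths, so one must instead build the global connecting function (equivalently, the deformation retraction of a neighborhood of $\Delta$ onto $\Delta$ rel $\Delta$) by the iterated-concatenation argument standard in cofibration theory, controlled by an NDR function $u:X\times X\to[0,1]$ with $u^{-1}(0)=\Delta$ and by the finite cover $\{N_i\times N_i\}$ of a neighborhood of $\Delta$. This is precisely the content supplied by \cite{DugundjiLocal} together with the equivalences in \cite{FoxFibre2} and \cite{HimmelbergTheorems}; the only thing to check on our side is that the local data produced above meets the hypotheses of that theorem, which it does, since our $\lambda$'s are stationary on the diagonal and have arbitrarily small image.
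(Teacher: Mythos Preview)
Your construction is correct and closely parallels the paper's, but with a cleaner target. The paper also reduces to \ref{EopLocExt}, but aims at the characterization in \cite[2.5]{DugundjiLocal}: for every $\epsilon$ there is a $\delta$ such that any two $\delta$-close maps $\phi_0,\phi_1:Y\to X$ (for \emph{arbitrary} $Y$) are homotopic rel their coincidence set via an $\epsilon$-small homotopy; it sets $\tilde Y=Y\times[0,1]$, $\tilde Z=(Y\times\{0,1\})\cup(Z\times[0,1])$, and compares the desired $\tilde\phi$ to the obviously extendible $\tilde\psi(y,t)=\phi_0(y)$. Your setup is precisely the special case $Y=\bar N\times\bar N$, $\phi_i$ the two projections, coincidence set $\Delta_N$; in that case the image $\phi(Z)\subseteq\bar N$ genuinely has diameter $<\delta$, so the hypothesis of \ref{EopLocExt} is met on the nose.

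What your route buys is that it verifies the \emph{definition} of local equiconnectedness directly---and this makes your final paragraph superfluous. Once you have, at each $p$, a neighborhood $N'$ and a map $\lambda:N'\times N'\times[0,1]\to X$ with $\lambda(x,y,0)=x$, $\lambda(x,y,1)=y$, $\lambda(x,x,t)=x$, you are finished: this \emph{is} what ``locally equiconnected'' means in \cite{FoxFibre2} and \cite{DugundjiLocal}. There is no local-to-global gluing left to perform, and the worries you raise about patching the $\lambda_i$ with a partition of unity or an NDR function simply do not arise. (The equivalence with ``$\Delta\hookrightarrow X\times X$ is a local cofibration'' that you mention is a theorem \emph{about} LEC spaces, not a stronger condition you still need to establish.)
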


\begin{proof}
Let $X$ be an $e$-open space.  We will show that for every $\epsilon>0$ there is a $\delta>0$ such that, if $Y$ is a space
and $\phi_0$, $\phi_1$ maps from $Y$ to $X$ which are $\delta$-close, and $Z=\{z\in Y:\phi_0(z)=\phi_1(z)\}$, then there
is a homotopy $(\phi_t)$ of maps from $Y$ to $X$ ($0\leq t\leq1$) with $\phi_t$ and $\phi_s$ $\epsilon$-close for all $t,s\in[0,1]$,
and such that $\phi_t(z)=\phi_0(z)=\phi_1(z)$ for all $z\in Z$ (cf.\ \cite[2.5]{DugundjiLocal}).

Let $\epsilon>0$, and let $\delta>0$ be as in \ref{EopLocExt}.  Let $\tilde Y=Y\times[0,1]$,
$$\tilde Z=(Y\times\{0,1\} )\cup(Z\times[0,1])\subseteq\tilde Y \ .$$
Define $\tilde\phi,\tilde\psi:\tilde Z\to X$ by $\tilde\phi(y,0)=\phi_0(y)$, $\tilde\phi(y,1)=\phi_1(y)$, $\tilde\phi(z,t)=\phi_0(z)=\phi_1(z)$;
$\tilde\psi(y,0)=\tilde\psi(y,1)=\phi_0(y)$, $\tilde\psi(z,t)=\phi_0(z)$.  The $\tilde\phi$ and $\tilde\psi$ are $\delta$-close
maps from $\tilde Z$ to $X$, and $\tilde\psi$ extends to $\tilde Y$ (setting $\tilde\psi(y,t)=\phi_0(y)$ for all $y,t$), and hence
$\tilde\phi$ also extends to $\tilde Y$, giving the desired homotopy.
\end{proof}

It is a longstanding open problem whether a locally equiconnected space is an ANR; we have shown that the class of $e$-open
spaces lies in between.  I am indebted to Sergey Melikhov (via MathOverflow) for pointing out the relevance of local
equiconnectedness.  I also thank Jan van Mill for helpful comments.

\section{Noncommutative Versions}

My primary interest in the problems discussed in this article is in the noncommutative versions of the problems
for C*-algebras and their relation to semiprojectivity.  See the companion article \cite{BlackadarHomotopy} for a discussion.

\bibliography{funextref}
\bibliographystyle{alpha}

\end{document}